\documentclass[11pt]{amsart}

\usepackage[T1]{fontenc}

\usepackage{latexsym,enumerate}

\usepackage{amsmath,amssymb,amsthm,amsfonts,latexsym}
\usepackage{pstricks, pst-node, pst-text, pst-3d}
%\insert plzn.tex
\usepackage[bookmarks]{hyperref}

\def\cal{\mathcal}

% pas la m�me notation pour norme et module.

\def\adh#1{\overline{#1}}

\setlength{\textwidth}{150mm} \setlength{\textheight}{215mm}

\setlength{\oddsidemargin}{.25in}

\setlength{\evensidemargin}{.25in} \setlength{\topmargin}{-0.2cm}

\setlength{\parskip}{.05in} \setlength{\hoffset}{-0.4cm}

\setlength{\headheight}{12pt} \setlength{\headsep}{25pt}

\let\=\partial

\newtheorem {pro}{Proposition}[section]
\newtheorem {thm}[pro]{Theorem}%[section]
%[section]
\newtheorem{lem}[pro]{Lemma}

\theoremstyle{definition}
 \newtheorem {rem}[pro]{Remark}%[section]
\newtheorem {dfn}[pro]{Definition}%[section]

\newtheorem {exa}[pro]{Example}

%% \begin{document}
%% \maketitle

%% \documentclass[a4paper,12pt]{amsart}
%% \pagestyle{plain}

%% \tolerance=2000
%% \usepackage{epsfig}
%% \usepackage{graphicx}
%% \graphicspath{{~/eps}}

\newcommand{\hti}{\tilde{h}}

\newcommand{\bba}{\adh{B}}
\newcommand{\jac}{\mbox{Jac}}

\newcommand{\R}{\mathbb{R}}

\newcommand{\Bb}{ \overline{B}}

\newcommand{\ep}{\varepsilon}

\newcommand{\pa}{\partial}

\title[]{Poincar\'e inequality on subanalytic sets}

\setcounter{section}{-1}

%%%%
\makeatletter

\@namedef{subjclassname@2020}{%
  \textup{2020} Mathematics Subject Classification}

%\@addtoreset{equation}{section}

\makeatother

\author[A. Valette  and G. Valette]{Anna Valette  and Guillaume Valette}

\address[A. Valette]{Katedra Teorii Optymalizacji i Sterowania, Wydzia\l\ Matematyki i Informatyki Uniwersytetu Jagiello\'nskiego, ul. S. \L ojasiewicza 6, Krak\'ow, Poland}
\email{anna.valette@im.uj.edu.pl}\address[G. Valette]{Instytut Matematyki Uniwersytetu
Jagiello\'nskiego, ul. S. \L ojasiewicza 6, Krak\'ow, Poland}\email{guillaume.valette@im.uj.edu.pl}

%\address[G. Valette]
%{Instytut Matematyczny PAN, ul. \'Sw. Tomasza 30, 31-027 Krak\'ow,
%Poland} \email{gvalette@impan.pl}
%14M99, 14R99}

\keywords{Poincar\'e inequality, subanalytic sets, singular domains, Sobolev spaces}

\thanks{Research partially supported by the NCN grant  2014/13/B/ST1/00543.}

\subjclass[2020]{26D10, 32B20, 14P10}

\begin{document}
\begin{abstract}
Let $\Omega$ be a subanalytic connected bounded open subset of $\R^n$,
with possibly singular boundary. We show that given $p\in
[1,\infty)$,  there is a constant $C$ such that for any  $u\in
W^{1,p}(\Omega)$ we have
$||u-u_{\Omega}||_{L^p} \le C||\nabla u||_{L^p},$
where we have set  $u_{\Omega}:=\frac{1}{|\Omega|}\int_{\Omega} u.$
\end{abstract}
\maketitle
\section{Introduction}There are several different types of inequalities being called by the name of the great French mathematician.
One of them asserts that, given a connected bounded open subset $\Omega$ of $\R^n$ with Lipschitz boundary and $p\in [1,\infty)$,   there is a constant $C$ such that for any  $u\in W^{1,p}(\Omega)$ we have 
$$||u-u_{\Omega}||_p \le C||\nabla u||_p,$$
where we have set  \begin{equation}\label{u}u_{\Omega}:=\frac{1}{|\Omega|}\int_{\Omega} u(x)\,dx \end{equation} and $|\Omega|$ stands for the Lebesgue measure of $\Omega$.

Under this form, it is sometimes called Poincar\'e-Wirtinger inequality. This inequality plays an important role in the theory of partial differential equations. It is well-known that it is no longer true if we drop the assumption that  $\Omega$ has Lipschitz boundary.
 It is actually an interesting problem to study the interplay between the geometry of the singularities  of the boundary and this result of analysis \cite{m} (see for example sections 1.1.11 and 6.4 of this book).

We show  in this article that this inequality holds on every subanalytic connected bounded open subset of $\R^n$, with possibly singular boundary. The idea is to use the techniques that the second author recently developed to study $L^p$ de Rham cohomology on singular subanalytic varieties \cite{gvpoincare} (see Theorem \ref{thm_local_conic_structure} below).  We do not put any extra {\em ad hoc} assumption on the Lipschitz geometry of the boundary.
  
The geometric properties that are needed are not really specific to the subanalytic category and could be proved to be valid on every polynomially bounded o-minimal structure \cite{c,vd}, with almost no modifications in the proofs, which means that our theorem could be established for domains that are definable in such a structure.
 It seems that the open sets that are definable in these structures constitute a natural class 
of singular domains to extend the theory of partial differential equations, and indeed the case of semi-algebraic domains, on which it is possible to carry out effective computations, is already satisfying for most of the applications.

We start by giving definitions and needed facts of subanalytic geometry.   Since we do not restrict ourselves to star-shaped domains, the proof of Poincar\'e  inequality will force us to construct homotopies that will not be smooth, yet will only be continuous almost everywhere. We thus establish two lemmas that are devoted to the construction of these homotopies, and then state and prove the main theorem (Theorem \ref{p2}). Although these techniques are very particular to our framework, these lemmas in fact bear some resemblance with tools developed to investigate much wider classes of metric spaces \cite{semmes}.

 We denote by
$||\cdot||$ the Euclidean norm of $\R^n$, by $B(x_0,\ep)$ (resp. $\overline{B}(x_0,\ep)$) the open (resp. closed) ball of radius $\ep>0$ centered at $x_0\in \R^n$ for this norm, and by $S(x_0,\ep)$  the corresponding sphere.
\section{Subanalytic sets}
 We now  recall some basic facts about subanalytic sets and functions. We refer to \cite{bm} (see also \cite{ds, l, livre}) for proofs and related facts.

\begin{dfn}\label{dfn_semianalytic}
A subset $E\subset \R^n$ is called {\bf semi-analytic} if it is  locally
defined by finitely many real analytic equalities and inequalities. Namely, for each $a \in   \R^n$, there is
a neighborhood $U$ of $a$, and real analytic  functions $f_{ij}, g_{ij}$ on $U$, where $i = 1, \dots, r, j = 1, \dots , s_i$, such that
\begin{equation}\label{eq_definition_semi}
E \cap   U = \bigcup _{i=1}^r\bigcap _{j=1} ^{s_i} \{x \in U : g_{ij}(x) > 0 \mbox{ and } f_{ij}(x) = 0\}.
\end{equation}
\end{dfn}

The flaw of the  semi-analytic category is that  it is not preserved by analytic morphisms, even when they are proper. To overcome this problem, it is convenient to work with a bigger class of sets, the  subanalytic sets, which are defined as the projections of the semi-analytic sets:%, and will  constitute the right category to study the geometry of semi-analytic sets.

\medskip

\begin{dfn}
 A subset $E\subset \R^n$  is  {\bf  subanalytic} if 
 each point $x\in\R^n$ has a neighbourhood $U$ such that $U\cap E$ is the image under the canonical projection $\pi:\R^n\times\R^k\to\R^n$ of some relatively compact semi-analytic subset of $\R^n\times\R^k$ (where $k$ can depend on $x$).
 \end{dfn}

%We shall denote by $\St_n$ the set of   subanalytic subsets of $\R^n$. %The union of two subanalytic sets is clearly subanalytic. Although extremely simple, the idea of working with projections of semi-analytic sets is powerful way to study the geometry semi-analytic sets. 
% 
\begin{dfn}
   We say that {\bf a mapping $f:A \to B$ is  subanalytic}, $A \subset \R^n$, $B\subset \R^m$ subanalytic, if its graph is a  subanalytic subset of $\R^{n+m}$. In the case $B=\R$, we say that  $f$ is a {\bf  subanalytic function}.
\end{dfn}
Subanalytic sets constitute a nice category to study the geometry of semi-analytic sets: it is  stable under union, intersection, complement, and Cartesian product. The closure of a subanalytic set is subanalytic. Moreover, these sets enjoy many finiteness properties. For instance, bounded subanalytic sets always have finitely many connected components, each of them being   subanalytic. 

It is also well-known that they are $C^0$ triangulable, in the sense that a subanalytic set is always homeomorphic to a simplicial complex. This implies in particular that locally, they have the topology of a cone over what is generally called, its link. This fact is sometimes referred as the {\it local conic structure} of the topology of subanalytic sets. Germs of subanalytic sets are nevertheless not bi-Lipschitz homeomorphic to cones, as it is shown by the simple example of a cusp $y^2=x^3$ in $\R^2$.  In particular, the cone property which is often used in functional analysis (see \cite{m} for the  definition), which is of metric nature, may fail.  The theorem below, achieved in \cite{gvpoincare}, however unravels the Lipschitz properties of the local conic structure. This Lipschitz conic structure was actually derived from techniques developed to construct triangulations that describe the metric properties of singularities \cite{vlt} (see also the survey \cite{livre}).

% Definition \ref{dfn_cell_decomposition} (and proposition  \ref{pro_cell_dec}) provides a general description of these sets.

%Another useful property, especially for the applications, is that the subanalytic sets are triangulable, i.e. given subanalytic set is subanalytically homeomorphic to some symplicial complex it was proved by  \L ojasiewicz  in his fundamnetal work \cite{l}. Thanks to this it can be shown  that  subanalytic sets admit local conic structure.For our purpose we will need Lipschitz version of later property, which was obtained recently by the second author in  \cite{gvpoincare}:
In the theorem below, $x_0* (S(x_0,\ep)\cap X)$ stands for the cone over $S(x_0,\ep)\cap X$ with vertex at $x_0$.

\begin{thm}[Lipschitz Conic Structure]\label{thm_local_conic_structure}
  Let  $X\subset \R^n$ be subanalytic and $x_0\in X $. %, and set for simplicity $X^\ep = B(x_0,\ep)\cap X$.
For $\ep>0$ small enough, there exists a Lipschitz subanalytic homeomorphism
$$H: x_0* (S(x_0,\ep)\cap X)\to  \Bb(x_0,\ep) \cap X,$$  
  satisfying $H_{| S(x_0,\ep)\cap X}=Id$, preserving the distance to $x_0$, and having the following metric properties:
\begin{enumerate}[(i)] 
%\item\label{item_dist}    $h$ preserves the distance to $x_0$. 
 \item\label{item_H_bi}     The natural retraction by deformation onto $x_0$ $$r:[0,1]\times  \Bb(x_0,\ep)\cap X \to \Bb(x_0,\ep)\cap X,$$ defined by $$r(s,x):=H(sH^{-1}(x)+(1-s)x_0),$$ is Lipschitz.   %\item  Moreover, for every $s\in (0,1]$, the retraction $r_s$, defined by $x\mapsto r_s(x):=r(s,x)$ is  Lipschitz.\item\label{item_r_cs_lip}  
 Indeed, there is a constant $C$ such that  for every fixed $s\in [0,1]$, the mapping $r_s$ defined by $x\mapsto r_s(x):=r(s,x)$, is $Cs$-Lipschitz.
 \item \label{item_r_bi}  For each $\delta>0$,
 the restriction of $H^{-1}$ to $\{x\in X:\delta \le ||x-x_0||\le \ep\}$ is Lipschitz and, for each $s\in (0,1]$, the map  $r_s^{-1}:\Bb(x_0,s\ep) \cap X\to \Bb(x_0,\ep) \cap X$ is Lipschitz. %the complement of $B(x_0,\eta)$ in  $x_0* (S(x_0,\ep)\cap X)$ is bi-Lipschitz. 
\end{enumerate}
\end{thm}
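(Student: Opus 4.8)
The plan is to read the theorem as a statement about \emph{straightening the distance function} $\rho(x):=\|x-x_0\|$ on $X$ in a small neighbourhood of $x_0$, so I first reduce to $x_0=0$. By the subanalytic triviality (Hardt) theorem, for $\ep$ small enough the restriction $\rho:X\cap(\Bb(x_0,\ep)\setminus\{x_0\})\to(0,\ep]$ is a locally trivial subanalytic fibration; choosing a subanalytic trivialization already produces a subanalytic homeomorphism between $X\cap\Bb(x_0,\ep)$ and the cone $x_0*(S(x_0,\ep)\cap X)$ carrying the $t$-slice onto $X\cap S(x_0,t)$, which is the classical $C^0$ local conic structure. The entire difficulty is to manufacture such a trivialization that is \emph{Lipschitz}, with the one-sided metric control near the vertex recorded in \eqref{item_H_bi}--\eqref{item_r_bi} — bearing in mind that a globally bi-Lipschitz such $H$ cannot exist (the cusp).

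The engine is a suitable subanalytic cell decomposition. I would invoke the triangulation-with-metric-control techniques of \cite{vlt} to decompose $X\cap\Bb(x_0,\ep)$ into finitely many subanalytic cells that are simultaneously (a) compatible with $\rho$, in the sense that on each cell $\rho$ is either constant or a submersion onto an interval, and (b) \emph{$L$-regular}: after an orthogonal change of coordinates each cell is a band $\{\phi_0(x')<x_k<\phi_1(x')\}$, or a graph $\{x_k=\phi(x')\}$, over an $L$-regular cell of lower dimension, the functions $\phi_i$ having derivatives bounded by a prescribed small constant. The feature that must be arranged on top of this — and which is the crux — is that the decomposition is \emph{conically structured}: each cell $C$ can be written, up to a controlled bi-Lipschitz distortion, as a family $(0,\ep]\ni t\mapsto C_t$ with $C_t$ the image of $C_\ep$ under the dilation $y\mapsto x_0+(t/\ep)(y-x_0)$. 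Ordinary cell decompositions are not radially self-similar, so one has to refine, and the refinement rests on {\L}ojasiewicz-type inequalities ensuring that the angular variation of the defining functions $\phi_i$ along $S(x_0,t)\cap X$ is controlled uniformly in $t$.

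Granting such a decomposition, $H$ is built cell by cell: on each cell one takes $H$ to be the radial straightening sending $(s,a)\in[0,1]\times C_\ep$ to the point of $C_{s\ep}$ corresponding to $a$ under the conic identification. Because each cell is $L$-regular and conically structured, this map is Lipschitz with a constant independent of $s$, and the rescaled maps $r_s$ contract by a factor comparable to $s$ — the $s$ coming from the dilation relating $S(x_0,s\ep)$ to $S(x_0,\ep)$ — which yields \eqref{item_H_bi}. One then checks that the per-cell maps agree on common faces (here one uses the compatibility of the decomposition), so that $H$ is a well-defined continuous subanalytic homeomorphism, equal to the identity on the $\ep$-slice and preserving $\rho$ by construction. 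For \eqref{item_r_bi}: on an annulus $\{\delta\le\|x-x_0\|\le\ep\}$ every cell remains uniformly bi-Lipschitz to its straightened model (the lower bound $\delta$ forbids the angular directions from collapsing), so $H^{-1}$ is Lipschitz there; and for each fixed $s\in(0,1]$ the same argument applied on $\Bb(x_0,s\ep)$ shows $r_s^{-1}$ is Lipschitz, with a constant that is allowed to deteriorate as $s\to0$ — it is bounded by a negative power of $s$, again via {\L}ojasiewicz.

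I expect two genuinely hard points. The first is purely geometric: producing the conically structured $L$-regular decomposition, since standard subanalytic cell decompositions are not radially self-similar and forcing this while keeping the derivative bounds requires the full strength of the metric triangulation results of \cite{vlt}. The second is the gluing: Euclidean Lipschitz bounds on individual cells do not automatically combine on a non-convex set, so one needs a uniform comparison between the inner (length) metric of $X\cap\Bb(x_0,\ep)$ and the ambient metric along the relevant cells, a comparison that itself must be extracted from the decomposition. Once both are in place, verifying the homeomorphism property and the retraction estimates amounts to bookkeeping with the scaling factors.
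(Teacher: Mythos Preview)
The paper does not prove this theorem at all: it is quoted verbatim from \cite{gvpoincare} (see the sentence ``The theorem below, achieved in \cite{gvpoincare}\ldots'' just before the statement), and the surrounding text only adds that the result was ``derived from techniques developed to construct triangulations that describe the metric properties of singularities \cite{vlt}''. So there is no in-paper proof to compare your proposal against.

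That said, your sketch is broadly in line with what the paper points to. You correctly identify that the $C^0$ conic structure comes cheaply from Hardt-type triviality of the distance function, that the genuine content is a Lipschitz trivialization built from a metrically controlled cell decomposition in the spirit of \cite{vlt}, and that a global bi-Lipschitz $H$ is impossible (the cusp example in the paper makes the same point). Your identification of the two hard steps --- producing a decomposition that is simultaneously $L$-regular and radially self-similar, and gluing the cellwise maps across a non-convex set --- is accurate, and these are precisely the issues addressed in \cite{vlt,gvpoincare}. What you have written is a reasonable roadmap rather than a proof: the construction of the ``conically structured $L$-regular decomposition'' is asserted, not carried out, and that is where essentially all of the work lies. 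If you want an actual proof you will have to go to \cite{gvpoincare}; within the present paper the theorem is a black box.
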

\begin{rem}\label{rem}
This theorem is actually valid in every polynomially bounded o-minimal structure, the same proof applying, simply replacing the word ``subanalytic'' with definable.
\end{rem}

\begin{exa}The difference between the notions of Lipschitz conic structure of a set and cone property may be unclear to the reader, and we wish to illustrate it by sketching what can be the mapping $H$ of the above theorem on the example of a cusp $$X:=\{(x,y)\in [0,1]\times \R:|y|\le x^2\}$$ with $x_0=(0,0)$. For each $(x,y)\in x_0*(S(0,1)\cap X)$, let  $$H(x,y):=(t(x,y) x,\,t^2(x,y)xy),$$ where 
$$t(x,y)=\left(\frac{2x^2+2y^2}{x^2+\sqrt{x^4+4x^2y^2(x^2+y^2)}}\right)^{1/2}.$$
The choice that we made for $t(x,y)$ ensures that this mapping preserves the distance to the origin. Moreover, a straightforward computation yields that on $x_0*(S(0,1)\cap X)$ we have $|\pa t(x,y)|\le \frac{C}{x}$ for some positive constant $C$, from which it comes down that $H$ has bounded first order partial derivatives (the function $t$ is bounded away from zero and infinity on $x_0*(S(0,1)\cap X)$).

Let us here emphasize that if we drop the condition that $H$ preserves the distance to the origin but simply require $\frac{|(x,y)|}{C}\le  |H(x,y)| \le C|(x,y)| $ for some constant $C$ (which is sufficient for proving Poincar\'e inequality) then it suffices to set $H(x,y)=(x,xy)$,  which leads to much easier computations.
\end{exa}

\section{Poincar\'e inequality}
For an open subset $\Omega\subset\R^n$ and $p\ge 1$ we denote by $$W^{1,p}(\Omega):= \{u\in L^p(\Omega),\; \frac{\partial u}{\pa x_i}\in L^p(\Omega)\; \mbox{\rm for any }\, 1\le i\le n\},$$ the Sobolev space, where $\frac{\partial u}{\partial x_i}$ are the partial derivatives of $u$ in the sense of distributions.
This space, equipped with the norm
$$||u||_{p}+\sum_{i=1}^n||\frac{\partial u}{\pa x_i}||_{p},$$ is a Banach space. Here, as usual, $||\cdot||_p$ stands for the $L^p$ norm.
It is well known that the set of smooth functions $\cal C^\infty(\Omega)$ is dense in  $W^{1,p}(\Omega)$ .

%In the sequel we will use following
%notation $$u_{\Omega}:=\frac{1}{|\Omega|}\int_{\Omega} u;$$
%\begin{dfn}We say that $u:\Omega\to\R$ is {\bf weakly smooth} function
%\end{dfn}

The proof of  Theorem \ref{p2} will require two geometric lemmas that we now present, the first being necessary to establish the second one. Let us recall that since subanalytic mappings are differentiable on an open dense subanalytic subset of their domain, they are always differentiable almost everywhere.

\begin{lem}\label{lem_1}
Let $\Omega\subset\R^n$ be a bounded open connected   subanalytic subset. There exists a  subanalytic map 
$$h: \Omega\times[0,1]\to \Omega, \, (x,s)\mapsto h_s(x)$$ continuous with respect to the second variable  and such that
\begin{enumerate}
\item $h_1(\Omega)\subset B(z,\alpha)\subset\Omega$ for some $\alpha>0$ and $z\in \Omega$;
\item $d_xh_t$ is invertible for almost every $(x,t)\in\Omega\times[0,1]$, and moreover there exists $C>0$ such that whenever  $d_xh_t$ is invertible,
we have $||d_xh_t^{-1}||\le C$.\end{enumerate}
\end{lem}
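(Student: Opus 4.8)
The plan is to build $h$ in two stages, with $h_0=\mathrm{id}$ throughout: first a subanalytic homotopy that pushes $\Omega$ off its own boundary onto a fixed compact subset $K\Subset\Omega$, and then a homotopy that slides $K$ into a small ball. The crucial freedom is that $h$ is only required to be subanalytic and continuous in $s$, not continuous in $x$, so one may define it separately over the pieces of a finite subanalytic partition of $\Omega$, the only gluing requirement being $h_0=\mathrm{id}$ on each piece; moreover, since the lower-dimensional pieces of such a partition form a null set in $\Omega\times[0,1]$, the invertibility statement and the bound in (2) only ever need to be checked on the top-dimensional pieces.

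\emph{Stage 1 (pushing off the boundary).} As $\Omega$ is bounded, $\partial\Omega$ is compact. I would argue by induction on $\dim\partial\Omega$ --- or, equivalently, through a subanalytic triangulation of $\overline\Omega$ compatible with $\Omega$, collapsed skeleton by skeleton. At each stage of the induction the part of $\partial\Omega$ still to be treated is, away from a lower-dimensional subset handled afterwards, either a finite set of isolated points or a piece along which $\overline\Omega$ is a manifold with boundary; the latter is pushed off by an elementary normal flow of small amplitude, and at an isolated point $a$ one invokes Theorem \ref{thm_local_conic_structure} for $X=\overline\Omega$, taken (as the construction permits) compatibly with the pair $(\overline\Omega,\partial\Omega)$, so that the homeomorphism $H$ carries the subcone $a*(S(a,\ep)\cap\Omega)$ onto $\Bb(a,\ep)\cap\Omega$. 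Over $B(a,\tfrac12\ep)\cap\Omega$ one then uses the conic retraction $r$ to build a subanalytic homotopy pushing every point radially outward from $a$ until it leaves $B(a,\tfrac12\ep)$ --- for instance by conjugating through $H$ the radial translation $v\mapsto a+\tfrac{\|v-a\|+\,s\ep/2}{\|v-a\|}(v-a)$ of the cone. Compatibility of $H$ ensures this lands in $\Omega$; that it satisfies $\|d_xh_t^{-1}\|\le C$ uniformly in $(x,t)$ is the main computation, and it is precisely here that the quantitative properties \ref{item_H_bi} and \ref{item_r_bi} of Theorem \ref{thm_local_conic_structure} --- the Lipschitz control of the conic retraction and of its inverse, together with the preservation of the distance to $a$ --- are used. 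After finitely many such steps every point has been carried out of a fixed neighbourhood of $\partial\Omega$, that is, into a compact set $K\Subset\Omega$.

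\emph{Stage 2 (collapsing the core).} Since $\Omega$ is open and connected it is connected by subanalytic arcs, so one can fix a compact connected subanalytic set $P$ with $K\subset P\Subset\Omega$, a point $z\in P$, and $\alpha>0$ so small that $B(z,\alpha)\subset\Omega$ and the $\alpha$-neighbourhood of $P$ lies in $\Omega$. Partition $K$ into finitely many subanalytic pieces $A_1,\dots,A_N$ of diameter $<\alpha$; for each $i$ choose $q_i\in A_i$ and a subanalytic arc $\gamma_i\colon[0,1]\to P$ with $\gamma_i(0)=q_i$, $\gamma_i(1)=z$, and set $h_s(x):=x+\gamma_i(s)-q_i$ for $x\in A_i$. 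This is a translation, so $d_xh_s=\mathrm{Id}$ and (2) holds with constant $1$; the image of $A_i$ at time $s$ lies within $\alpha$ of $\gamma_i(s)\in P$, hence inside $\Omega$, and at time $1$ it lies in $B(z,\alpha)$. Concatenating (after reparametrising $[0,1]$) the Stage 1 homotopy $\Phi$ with this one --- replacing the two partitions by their common subanalytic refinement and pulling the Stage 2 partition of $K$ back by $\Phi_1$ --- produces a single subanalytic $h$, continuous in $s$, with $h_1(\Omega)\subset B(z,\alpha)\subset\Omega$; and on each top-dimensional piece its $x$-derivative during the second half equals the Stage 1 derivative $d_x\Phi_1$ (composed with the identity), so $d_xh_t$ is invertible almost everywhere with uniformly bounded inverse. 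This yields (1) and (2).

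\emph{The main obstacle.} The heart of the matter --- the one step that genuinely requires more than soft topology --- is Stage 1: producing the local push-off so that (2) holds \emph{uniformly}, i.e. so that the inverse $x$-derivative stays bounded even as one approaches $\partial\Omega$, where $\Omega$ may be arbitrarily cuspidal. This is exactly the information encoded in the \emph{Lipschitz} (as opposed to merely topological) conic structure of Theorem \ref{thm_local_conic_structure}. A secondary, essentially organisational, difficulty is to amalgamate the finitely many local push-offs into one globally defined, $s$-continuous, $\Omega$-valued subanalytic homotopy without their undoing one another, and to order the inductive steps so that later pushes do not reintroduce proximity to parts of $\partial\Omega$ already treated: a partition of unity is unavailable in the subanalytic category, and on a connected piece a time-slice cannot reverse the sign of its Jacobian, so no ``folding'' is permitted there --- which is what forces both the dimension-ordered collapse in Stage 1 and the subdivision into rigidly translated small pieces in Stage 2, in place of any single global contraction.
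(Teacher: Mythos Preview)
Your two-stage plan and your identification of the key tools --- the Lipschitz Conic Structure Theorem and the freedom to define $h$ piecewise over a finite subanalytic partition of $\Omega$ --- are exactly right, and Stage~2 (rigid translations of small pieces along arcs) is correct as written. The gap is in Stage~1, and the paper's organization there is genuinely different from yours.

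Your Stage~1 proposes an ``induction on $\dim\partial\Omega$'' (equivalently, collapsing a triangulation skeleton by skeleton), but no clear inductive hypothesis is formulated, and the scheme runs into the very difficulty you flag as merely ``organisational''. A normal flow off a top-dimensional smooth stratum of $\partial\Omega$ need not have uniformly bounded inverse derivative where that stratum closes up on a lower one; and a subsequent conic push away from a lower stratum lands in an annulus that still meets $\partial\Omega$ along the adjacent higher strata, so it does not in general produce an image bounded away from $\partial\Omega$. There is no evident ordering of strata that prevents later pushes from reintroducing proximity to earlier ones.

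The paper avoids this entirely by a different induction. It localizes \emph{first}, using your piecewise-gluing observation, to a single ball $\overline B(x_0,\ep)\cap\Omega$ with $x_0\in\partial\Omega$, and then carries out the \emph{whole} construction on that piece by induction on the ambient dimension $n$. Concretely: the conic structure yields a map $g_s$ pushing $\overline B(x_0,\ep)\cap\Omega$ onto the annulus $\overline B(x_0,\ep)\cap\Omega\setminus B(x_0,s\ep)$ with inverse derivative bounded uniformly in $s\le\tfrac12$ (this is exactly where (\ref{item_H_bi}) and (\ref{item_r_bi}) are used); the annulus is bi-Lipschitz, via $r$, to $(S(x_0,\ep)\cap\Omega)\times[\tfrac12,1]$; and $S(x_0,\ep)\cap\Omega$, placed in a chart, is a bounded open connected subanalytic subset of $\R^{n-1}$, so the inductive hypothesis supplies the collapsing family $\tilde h_s$ on the link. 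Composing $g_{1/2}$ with $\tilde h_s$ (extended trivially in the radial factor) already lands in a small ball --- no separate Stage~2 is needed. The uniform bound on $\|d_xh_t^{-1}\|$ follows because each local $h$ is a finite composition of maps each with bounded inverse derivative.

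In short: keep your gluing observation and your use of the conic retraction, but run the induction on $n$, applied to the link $S(x_0,\ep)\cap\Omega$, rather than attempting a global stratum-by-stratum push off $\partial\Omega$.
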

\begin{proof}
The proof relies on the following two observations. 

\begin{enumerate}[(a)]
\item  In condition (1), possibly composing $h$ with a homothetic transformation, we can choose $\alpha$ as small as we wish. Moreover,  as $\Omega$ is connected and  subanalytic, any two points of $\Omega$ can be joined by a  subanalytic arc  $\gamma$.  Therefore, possibly composing with a translation of the ball $B(z,\varepsilon)$ through an arc $\gamma$, the point $z$ can be replaced with any element of $\Omega$.    
\item\label{union} It is enough to construct the desired family of maps on a finite (subanalytic) cover  of $\Omega$. The reason is that, if there exist mappings $h:U \times[0,1]\to \Omega$  and $h':U'\times[0,1]\to\Omega$, where $U, U'\subset\Omega$ are subanalytic subsets, satisfying $(1)$ and $(2)$ of the lemma, then we can construct a subanalytic mapping $h'':(U\cup U')\times[0,1]\to\Omega$ satisfying $(1)$ and $(2)$ as well. Indeed, for instance, it is enough to set $$h''(x,t)=\begin{cases}h(x,t),&  x\in U\\
h'(x,t),& x\in U'\setminus U.\end{cases}$$
The obtained mapping $h''$ is then subanalytic and continuous with respect to $t$ (it may fail to be continuous with respect to $x$ but, as it is subanalytic, it is smooth on an open dense subanalytic set). By induction, given several sets $U_1,\dots,U_k$ and several mappings $h_i:U_i\times[0,1]\to\Omega$, $i=1,\dots,k$, we therefore can define a mapping $h:\bigcup_{i=1}^k U_i \times[0,1]\to\Omega$ that has the required properties.
\end{enumerate}

As the set $\adh\Omega$ is compact, by the just above observation (\ref{union}), it suffices to define $h_s(x)$ on $\bba(x_0,\ep)\cap \Omega$, for each $x_0\in \adh\Omega$, with $\ep>0$ small. Of course, the result is clear if $x_0\in \Omega$.  \\ 
Fix now $x_0$ in the boundary of $\Omega$.   Given $s\in [0,1)$, define a function $\nu_s:[0,1]\to [s,1]$ by $$\nu_s(u)=s+(1-s)u.$$  
For every $s<1$,  $\nu_s'(u)\equiv 1-s$, and therefore $\nu_s$  is a bi-Lipschitz homeomorphism with bi-Lipschitz constant $L_{\nu_s}$ which remains bounded if $s$ stays  bounded  away  from $1$. Observe also that, since we can argue separately on each connected component of $\bba(x_0,\ep)\cap \Omega$ (thanks to (\ref{union})), it is no loss of generality to assume that this set is connected.

Let  $r$ and $\ep$ be as in the Lipschitz Conic Structure Theorem  (applied to $\Omega\cup \{x_0\}$ at $x_0$).  Set $\rho_\ep(x):=\frac{||x-x_0||}{\ep}$.
We claim that for every $s\in[0,1)$ the mapping $x \mapsto r_{\nu_s^{-1}(\rho_\ep(x))}(x)$ induces a homeomorphism $\theta_s$ from  $\adh{B}(x_0,\ep)\cap \Omega \setminus \bba(x_0,s\ep)$ onto $\adh{B}(x_0,\ep)\cap \Omega $.  Actually, since the homeomorphism $H$, given by the Lipschitz Conic Structure Theorem, preserves the distance to the point $x_0$, and because for each $s$ the map $r_s$ is,  up to $H$, a homothetic transformation (i.e., $H\circ r_s\circ H^{-1}$ is a homothetic transformation),  $r_s$ maps injectively $S(x_0,t)\cap \Omega$ onto $S(x_0,ts)\cap \Omega$ for all $s\in (0,1]$ and $t\in [0,\ep]$.  As $\nu_s$ is a strictly increasing function, it therefore suffices to check that $x\mapsto r_{\nu_s^{-1}(\rho_\ep(x))}(x)$ maps $S(x_0,\ep)\cap \Omega$ onto itself and $S(x_0,s\ep)\cap \Omega$ onto $\{x_0\}$, that is to say, that $\nu_s(1)=1$ and  $\nu_s(0)=s$, which is clear from the definition of $\nu_s$.

Hence, we can define for each $s\in[0,\frac{1}{2}]$ a mapping 
\begin{equation}g_s:\adh{B}(x_0,\ep)\cap \Omega \to \adh{B}(x_0,\ep)\cap \Omega \setminus \bba(x_0,s\ep)\end{equation} by setting  for $y\in \adh{B}(x_0,\varepsilon)\cap \Omega$
$$g_s(y):=\theta^{-1}_s(y).$$
Note that, since $g^{-1}_s=\theta_s$, it is a Lipschitz mapping, with a Lipschitz constant which can be bounded independently of $s\le\frac{1}{2}$. 

By (\ref{union}) we may assume that $S(x_0,\ep)\cap \Omega$ is included in a chart of some coordinate system of $\Omega$. By induction on $n$, we therefore know that there is a family of mappings $$\hti_s:S(x_0,\ep) \cap \Omega\to S(x_0,\ep) \cap \Omega, \quad s\in [0,1] $$ satisfying  $\hti_1(S(x_0,\ep) \cap \Omega)\subset B(a,\tilde{\alpha})$ for some $a\in S(x_0,\ep)\cap\Omega$ and $\tilde{\alpha}$ small enough, such that $d \hti_s^{-1}$  is bounded uniformly in $s$.  Let us extend trivially (i.e., constantly with respect to the last variable)  this family of mappings   to a family of mappings, keeping the same notation $$\hti_s:S(x_0,\ep) \cap \Omega \times [\frac{1}{2},1]\to S(x_0,\ep) \cap \Omega  \times [\frac{1}{2},1], \quad s\in [\frac{1}{2},1] .$$

We now shall define the desired mapping $h$ by applying successively $g$ and $\hti$. 
Observe for this purpose that $r$ induces a bi-Lipschitz homeomorphisms from $S(x_0,\ep)\cap \Omega \times [\frac{1}{2},1]$ to $ \Omega \cap \bba(x_0,\ep)\setminus B(x_0,\frac{\ep}{2})$. Denote by $\Psi$ its inverse and let $h_s:\Omega\cap \bba(x_0,\ep)\to \Omega\cap \bba(x_0,\ep) $ be defined by 
$$h_s(x)=\begin{cases}g_s(x), & s\le \frac{1}{2} \\
r(\hti_s(\Psi(g_{1/2}(x))), &s\ge \frac{1}{2} 
\end{cases}.
$$
The inverse of its derivative is bounded by construction.
%This definition requires to check that $r_{\nu_s^{-1}(y)}$ is invertible. Actually, for every $t$, $r_t$ is a homeomorphism that maps $S(x_0,\eta)\cap \Omega$ onto $S(x_0,t\eta)\cap \Omega$, for every $\eta>0$ small. As $\nu_s$ is a strictly increasing function, o 
% there exist $\ep>$ and a Lipschitz homeomorphism
%$$H: x_0* (S(x_0,\ep)\cap \adh{\Omega})\to  \Bb(x_0,\ep) \cap \adh{\Omega}$$  satisfying properties $(i)$ and $(ii)$ of Theorem \ref{thm_local_conic_structure}. Put $\alpha_s(u)=1-s+su$ which is an increasing function of real variable $u\in[0,1]$ for any $s\in(0,1]$.
%Define
%$U$ is an open subset of $\Omega$ for which there is a map $h: U \times[0,1]\to \Omega, \, (x,s)\mapsto h_s(x)\in\Omega$ continuous with respect to $s$ and satisfying $(1)$ and  $(2)$ above, then 
% such that the for any point $y\in\Omega$
%here exists a  subanalytic map $\hat{h}:\Omega\times[0,1]\to \Omega$  such that $\hat{h}_1(\Omega)\subset B(y,\hat\varepsilon)$ $T_{ij}:B(a_i,\eta)\to B(a_j,\eta)$ be a translation of the ball $B(a_i,\eta)$ to the ball $B(a_j,\eta)$ defined by moving the center $a_i$ through the arc $\gamma_{ij}$.
\end{proof}

The above lemma makes it possible for us to prove the following lemma which will be useful to establish Theorem \ref{p2}.
 We use the notation $\jac(\Gamma)$ to denote the absolute value of the determinant of the Jacobian matrix of a mapping $\Gamma$.
\begin{lem}\label{lem_2}
Let $\Omega\subset\R^n$ be an open bounded connected subanalytic subset.  There exists a subanalytic  family of  continuous arcs 
$\gamma_{x,y}:[0,1]\to \Omega$, $x,y\in\Omega$, such that $\gamma(0)=x$, $\gamma(1)=y$ for each such $x$ and $y$, and $||\gamma'_{x,y}(s)||\le C$ for all $s\in(0,1)$, and some constant $C$ independent of $x$ and $y$. Moreover, there is $\eta>0$ such that for almost every $x,y\in\Omega$:
$$ \begin{cases}\jac(\Gamma_{s,y})(x)\ge \eta,& s\ge \frac{1}{2}\\
\jac(\tilde{\Gamma}_{s,x})(y)\ge \eta,& s< \frac{1}{2}\end{cases}$$ where 
 $\Gamma_{s,y}:\Omega\ni x\mapsto\gamma_{x,y}(s)\in\Omega$
 and $\tilde{\Gamma}_{s,x}:\Omega\ni y\mapsto\gamma_{x,y}(s)\in\Omega$.
\end{lem}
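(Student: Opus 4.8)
The plan is to build the arcs $\gamma_{x,y}$ by concatenating three pieces, using the map $h$ from Lemma \ref{lem_1}: first flow $x$ to $h_1(x)\in B(z,\alpha)$, then travel inside the small ball $B(z,\alpha)$ from $h_1(x)$ to $h_1(y)$ along a straight segment (or any fixed smooth family of arcs joining two points of a ball), then flow backwards from $h_1(y)$ to $y$. More precisely, reparametrize so that for $s\in[0,\tfrac13]$ we set $\gamma_{x,y}(s)=h_{3s}(x)$, for $s\in[\tfrac13,\tfrac23]$ we interpolate linearly between $h_1(x)$ and $h_1(y)$ inside $B(z,\alpha)$, and for $s\in[\tfrac23,1]$ we set $\gamma_{x,y}(s)=h_{3(1-s)}(y)$. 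Since $h$ is subanalytic and continuous in $s$, and the middle segment depends subanalytically on its endpoints, the whole family $\gamma_{x,y}$ is subanalytic and each arc is continuous; the endpoint conditions $\gamma_{x,y}(0)=x$, $\gamma_{x,y}(1)=y$ are immediate, and the image stays in $\Omega$ because $B(z,\alpha)\subset\Omega$ and $h$ takes values in $\Omega$.

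For the bound $\|\gamma'_{x,y}(s)\|\le C$: on the first third, $\gamma'_{x,y}(s)=3\,\partial_t h_{3s}(x)$, and $h$ being subanalytic on a bounded set with $h$ continuous in $t$, its $t$-derivative is bounded (one should note $\partial_t h$ is bounded on the dense open set where $h$ is differentiable, which is all that is needed since we only claim the bound for almost every $s$, or rather for all $s$ where $\gamma'$ exists); on the middle third the derivative is $3(h_1(y)-h_1(x))$, bounded by $3\,\mathrm{diam}(B(z,\alpha))$; on the last third it is symmetric. So $C$ can be taken uniform in $x,y$.

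The Jacobian estimate is the main point, and it is where Lemma \ref{lem_1}(2) enters decisively. Consider $s\ge\tfrac12$: then $s$ lies in the last third (after reparametrization $s\mapsto 3(1-s)$), so $\Gamma_{s,y}(x)=\gamma_{x,y}(s)$ only depends on $y$, not on $x$ — that cannot be right, so instead the correct reading is that for $s\ge\tfrac12$ the map $x\mapsto\gamma_{x,y}(s)$ should still genuinely move with $x$. This forces the concatenation to be arranged so that the ``$x$-part'' of the flow occupies $[\tfrac12,1]$ and the ``$y$-part'' occupies $[0,\tfrac12]$: for $s\ge\tfrac12$, $\gamma_{x,y}(s)=h_{2(1-s)}(x)$ composed with a fixed motion, so $\Gamma_{s,y}(x)=\Phi_s(h_{2s-1}(x))$ where $\Phi_s$ is a fixed bi-Lipschitz map; then $d_x\Gamma_{s,y}=d\Phi_s\circ d_x h_{2s-1}$, and by Lemma \ref{lem_1}(2) the inverse of $d_x h_t$ is bounded by $C$, hence $|\det d_x h_t|\ge C^{-n}$ wherever it is defined, which is almost everywhere; combined with a lower bound on $|\det d\Phi_s|$ (uniform since $s$ stays away from $1$, using the $\nu_s$-type bounds as in the proof of Lemma \ref{lem_1}) we get $\jac(\Gamma_{s,y})(x)\ge\eta$ for a.e.\ $x$. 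The case $s<\tfrac12$ is the mirror image with the roles of $x$ and $y$ swapped, giving $\jac(\tilde\Gamma_{s,x})(y)\ge\eta$. The one subtlety requiring care is checking that the intermediate ``gluing'' maps ($\Psi$, the passage through $S(x_0,\ep)\cap\Omega$, and the segment in $B(z,\alpha)$) all have Jacobians bounded below uniformly in $s$ on the relevant range of $s$; this follows by tracking the uniform Lipschitz constants already recorded in Lemma \ref{lem_1}, together with the fact that a finite composition of maps with bounded-below Jacobian has bounded-below Jacobian. The remaining bookkeeping — verifying subanalyticity of the concatenated family and that it lands in $\Omega$ — is routine given observation (\ref{union}) and the construction of $h$.
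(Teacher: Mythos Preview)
Your construction is exactly the paper's: concatenate $s\mapsto h_{3s}(x)$ on $[0,\tfrac13]$, the straight segment $(2-3s)h_1(x)+(3s-1)h_1(y)$ in $B(z,\alpha)$ on $[\tfrac13,\tfrac23]$, and $s\mapsto h_{3(1-s)}(y)$ on $[\tfrac23,1]$, and bound the Jacobian via $\|d_x h_t^{-1}\|\le C$ from Lemma~\ref{lem_1}(2), which gives $|\det d_x h_t|\ge C^{-n}$ almost everywhere. The linear middle piece contributes the factor $(2-3s)^n$ or $(3s-1)^n$, bounded below by $2^{-n}$ on the relevant half. That is the whole argument in the paper; your detour through auxiliary maps $\Phi_s$, $\Psi$, and the sphere $S(x_0,\ep)\cap\Omega$ is unnecessary --- those objects live inside the proof of Lemma~\ref{lem_1} and are already packaged into $h$.

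You are right, however, to be puzzled about which half of $[0,1]$ carries which Jacobian bound. With the natural construction above, the map $x\mapsto\gamma_{x,y}(s)$ is constant in $x$ for $s>\tfrac23$, so its Jacobian vanishes there; what one actually obtains is $\jac(\Gamma_{s,y})(x)\ge\eta$ for $s\le\tfrac12$ and $\jac(\tilde\Gamma_{s,x})(y)\ge\eta$ for $s>\tfrac12$, i.e., the two cases displayed in the statement are swapped. The paper's own proof computes exactly these bounds (for $H_{s,y}$ when $s\le\tfrac32$ and for $H_{s,x}$ when $s>\tfrac32$, in the unrescaled parameter), so the discrepancy is a typo in the statement, harmless for the application in Theorem~\ref{p2}. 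Your attempted fix --- placing the $x$-flow on $[\tfrac12,1]$ via $\gamma_{x,y}(s)=h_{2(1-s)}(x)$ --- would force $\gamma_{x,y}(1)=h_0(x)=x$, violating the endpoint condition $\gamma_{x,y}(1)=y$; the correct resolution is simply to swap the labels on the two cases, not to reshuffle the construction.
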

\begin{proof}
%Let $x_i\in\adh\Omega$. By the local conic structure property, there exists $\varepsilon_i>0$ such that $\Omega\cap B(x_i,\varepsilon_i)$ is homeomorphic to the cone $x_i\ast(S(x_i,\varepsilon_i)\cap\Omega)$. 
%As $\adh\Omega$ is compact there exist a finite number of points $x_0,\dots, x_k\in\adh\Omega$ such that $\bigcup\limits_{i=0}^k B(x_i,\varepsilon_i)$ is an open cover of $\adh\Omega$. For each $i\in\{0,\dots, k\}$ take an open ball $B(a_i,\eta_i)\subset(B(x_i,\varepsilon_i)\cap\Omega)$. Denote by $\gamma_{ij}$ a smooth definable arc connecting $a_i$ with $a_j$. Let $\eta:=\min_{i,j\in\{0,\dots, k\}}\{\eta_i, \mbox{\rm dist}(\gamma_{ij},\partial\Omega)\}$

The desired family of arcs will require the following mappings. Let:
\begin{enumerate}
\item 
 $h:\Omega\times[0,1]\to \Omega$ be a family of mappings as provided by Lemma \ref{lem_1}.
 \item $g:B(z,\alpha)\times B(z,\alpha)\times[0,1]\to B(z,\alpha)$ be defined by $(x,y,s)\mapsto (1-s)x+sy$ (where $z$ and $\alpha$ are provided by Lemma \ref{lem_1}).
\end{enumerate}
Next, we define $H:\Omega\times \Omega \times[0,3]\to\Omega$ by
%Take now $x,y\in\Omega$, assume that $x\in B(x_i,\varepsilon_i)$ and $y\in B(x_j,\varepsilon_j)$ for some $i,j\in\{0,\dots, k\}$. To construct the arc $\gamma_{x,y}$ we will make use of the following isostopy $H$ defined by
$$H(x,y,s)=\begin{cases}h(x,s)& 0\le s\le 1\\
g(h(x,1),h(y,1),s-1)& 1<s\le 2\\
h(y,3-s)& 2<s\le 3.
\end{cases}
$$
%This gives the map $H:\Omega\times\Omega\times[0,1]\to\Omega$ by $H(x,y,s)=H_{ij}(x,y,s)$, where $i=\min\{i\;:\; x\in B(x_i,\varepsilon_i)\}$, $j=\min\{j\;:\; y\in B(x_j,\varepsilon_j)\}$. Observe now that
 Note that, by $(1)$ of Lemma \ref{lem_1}, we know that there is $\lambda$ such that $|\det d_x h_s^{-1}|\le \lambda$ for almost every $x$ in $\Omega$. As a matter of fact:
\begin{itemize}
\item for $s\le \frac{3}{2}$ and any fixed $y$, the jacobian of the map $H_{s,y}:x\mapsto H(x,y,s)$ can be bounded as follows:
%for $s\in[0,1]$, we have 
$$\jac (H_{s,y})(x)=\begin{cases}\jac (h_s)(x)=\frac{1}{|\det d_x h_s^{-1}| }\ge \frac{1}{\lambda}& s\in[0,1]\\
%For $s\in(1,\frac{3}{2}]$ we have $| Jac H_{s,y}(x)|=
(2-s)^n\jac (h_{1})(x)>\frac{1}{2^n\lambda}& s\in(1,\frac{3}{2}];\end{cases}$$
\item  for $s> \frac{3}{2}$ and any fixed $x$ we have:
% for $s\in(\frac{3}{2},2]$  
 $$ \jac (H_{s,x})(y)=\begin{cases}(s-1)^n \jac (h_{1})(y)>\frac{1}{2^n\lambda}& s\in(\frac{3}{2},2]\\
\jac (h_{3-s})(y)=\frac{1}{|\det d_y h_{3-s}^{-1}| }\ge \frac{1}{\lambda}&s\in(2,3].\end{cases}$$
% and for $s\in(2,3]$:

%$|Jac H_{s,y}(y)|=||\frac{1}{Jac h^{-1}_{1js}(y)}|>\frac{1}{\lambda_j}$.
\end{itemize}
Hence, the family of  arcs  $\gamma_{x,y}:[0,1]\to\Omega$ 
defined by $\gamma_{x,y}(s)=H(x,y,3s)$ fulfills  the required  properties.
\end{proof}

We are now ready to prove our main result:
\begin{thm}\label{p2}
Let $\Omega\subset\R^n$ be a bounded connected open subanalytic subset. For each $p\ge 1$, there exists $C>0$  such that for any $u\in W^{1,p}(\Omega)$ the following inequality holds
$$||u-u_{\Omega}||_p \le C||\nabla u||_p,$$ where $u_\Omega$ is as in (\ref{u}).
\end{thm}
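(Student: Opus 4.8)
The plan is to derive the inequality from the family of arcs $\gamma_{x,y}$ constructed in Lemma \ref{lem_2}, using the standard representation of $u(y)-u(x)$ as a line integral of $\nabla u$ along $\gamma_{x,y}$. By density of $\mathcal{C}^\infty(\Omega)$ in $W^{1,p}(\Omega)$ it suffices to prove the inequality for smooth $u$ (with the constant $C$ independent of $u$), and then pass to the limit. Since $u_\Omega$ is the average of $u$, we can write
$$u(x)-u_\Omega=\frac{1}{|\Omega|}\int_\Omega\bigl(u(x)-u(y)\bigr)\,dy,$$
and hence, applying Jensen's inequality in the $y$ variable,
$$\|u-u_\Omega\|_p^p=\int_\Omega|u(x)-u_\Omega|^p\,dx\le\frac{1}{|\Omega|}\int_\Omega\int_\Omega|u(x)-u(y)|^p\,dy\,dx.$$
So the task reduces to bounding $\int_\Omega\int_\Omega|u(x)-u(y)|^p\,dx\,dy$ by $C\|\nabla u\|_p^p$.

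Next I would write $u(y)-u(x)=\int_0^1\frac{d}{ds}\,u(\gamma_{x,y}(s))\,ds=\int_0^1\nabla u(\gamma_{x,y}(s))\cdot\gamma'_{x,y}(s)\,ds$. Using $\|\gamma'_{x,y}(s)\|\le C$ (from Lemma \ref{lem_2}) together with Cauchy--Schwarz and Jensen in the $s$ variable gives
$$|u(x)-u(y)|^p\le C^p\int_0^1\|\nabla u(\gamma_{x,y}(s))\|^p\,ds.$$
Now integrate over $(x,y)\in\Omega\times\Omega$ and split the $s$-integral into $s\ge\frac12$ and $s<\frac12$. On the first piece, for fixed $s$ and $y$, make the change of variables $z=\Gamma_{s,y}(x)$; the lower Jacobian bound $\jac(\Gamma_{s,y})(x)\ge\eta$ from Lemma \ref{lem_2} yields $dx\le\eta^{-1}\,dz$ (after accounting for the fact that $\Gamma_{s,y}$ need not be injective, one controls the multiplicity — see below), so
$$\int_\Omega\|\nabla u(\gamma_{x,y}(s))\|^p\,dx\le\frac{m}{\eta}\int_\Omega\|\nabla u(z)\|^p\,dz=\frac{m}{\eta}\,\|\nabla u\|_p^p,$$
uniformly in $s$ and $y$; then integrating in $y$ over $\Omega$ (finite measure) and in $s$ over $[\frac12,1]$ keeps the bound finite. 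The piece $s<\frac12$ is handled symmetrically with $\tilde\Gamma_{s,x}$ and the change of variables in the $y$ variable. Summing the two contributions and tracking constants gives $\int_\Omega\int_\Omega|u(x)-u(y)|^p\,dx\,dy\le C'\|\nabla u\|_p^p$, which combined with the Jensen reduction above proves the theorem for smooth $u$, and then for all $u\in W^{1,p}(\Omega)$ by density.

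The main obstacle is the change-of-variables step: the maps $\Gamma_{s,y}$ and $\tilde\Gamma_{s,x}$ are subanalytic but not injective in general, and the integrand $\|\nabla u\|^p$ must be pulled back correctly. The right tool is the area/coarea formula for Lipschitz maps (the arcs and their endpoint maps are at least locally Lipschitz with bounded derivative, being subanalytic with the uniform bounds of Lemmas \ref{lem_1}--\ref{lem_2}): for a non-negative measurable $\varphi$,
$$\int_\Omega\varphi(\Gamma_{s,y}(x))\,\jac(\Gamma_{s,y})(x)\,dx=\int_{\R^n}\varphi(z)\,\#\{x:\Gamma_{s,y}(x)=z\}\,dz,$$
so the lower Jacobian bound $\jac\ge\eta$ gives $\int_\Omega\varphi(\Gamma_{s,y}(x))\,dx\le\eta^{-1}\int_\Omega\varphi(z)\,N(z)\,dz$ with $N(z)$ the multiplicity. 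One then needs $N(z)$ to be bounded by a constant independent of $s,y$: this follows from the subanalyticity and the uniform geometric bounds — a bounded subanalytic family of subanalytic maps has uniformly bounded generic fiber cardinality — or, more cheaply, one observes that by construction (via $H$ built from $h$ and the affine homotopy $g$) each $\Gamma_{s,y}$ is injective on the relevant pieces, so $N\equiv 1$. Making this multiplicity control precise, and checking the measurability and a.e.\ differentiability needed to apply the area formula to these merely almost-everywhere-smooth subanalytic maps, is where the care is required; everything else is routine manipulation of $L^p$ norms.

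\begin{proof}
By density of $\mathcal{C}^\infty(\Omega)$ in $W^{1,p}(\Omega)$, it suffices to prove the inequality for $u\in\mathcal{C}^\infty(\Omega)$ with a constant $C$ independent of $u$; the general case follows by approximation. Let $\gamma_{x,y}$, $\Gamma_{s,y}$, $\tilde\Gamma_{s,x}$, $C$ and $\eta$ be as in Lemma \ref{lem_2}. Since $u_\Omega=\frac{1}{|\Omega|}\int_\Omega u(y)\,dy$, we have for every $x$
$$u(x)-u_\Omega=\frac{1}{|\Omega|}\int_\Omega\bigl(u(x)-u(y)\bigr)\,dy,$$
so by Jensen's inequality
$$\int_\Omega|u(x)-u_\Omega|^p\,dx\le\frac{1}{|\Omega|}\int_\Omega\int_\Omega|u(x)-u(y)|^p\,dy\,dx.$$
Writing $u(y)-u(x)=\int_0^1\nabla u(\gamma_{x,y}(s))\cdot\gamma'_{x,y}(s)\,ds$, using $\|\gamma'_{x,y}(s)\|\le C$ and Jensen's inequality in $s$, we obtain
$$|u(x)-u(y)|^p\le C^p\int_0^1\|\nabla u(\gamma_{x,y}(s))\|^p\,ds.$$
Integrating over $\Omega\times\Omega$ and splitting $\int_0^1=\int_0^{1/2}+\int_{1/2}^1$:
$$\int_\Omega\int_\Omega|u(x)-u(y)|^p\,dx\,dy\le C^p\int_{1/2}^1\!\!\int_\Omega\Bigl(\int_\Omega\|\nabla u(\Gamma_{s,y}(x))\|^p\,dx\Bigr)dy\,ds+C^p\int_0^{1/2}\!\!\int_\Omega\Bigl(\int_\Omega\|\nabla u(\tilde\Gamma_{s,x}(y))\|^p\,dy\Bigr)dx\,ds.$$
For fixed $s\ge\frac12$ and $y$, the map $\Gamma_{s,y}$ is subanalytic and differentiable a.e.\ on $\Omega$; by the area formula and the bound $\jac(\Gamma_{s,y})(x)\ge\eta$,
$$\int_\Omega\|\nabla u(\Gamma_{s,y}(x))\|^p\,dx\le\frac1\eta\int_\Omega\|\nabla u(\Gamma_{s,y}(x))\|^p\,\jac(\Gamma_{s,y})(x)\,dx\le\frac1\eta\int_{\R^n}\|\nabla u(z)\|^p\,N_{s,y}(z)\,dz,$$
where $N_{s,y}(z)=\#\Gamma_{s,y}^{-1}(z)$. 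Since $\Gamma_{s,y}$ takes values in $\Omega$ and is, by the construction in Lemma \ref{lem_2} (the composition of the injective pieces of $h$ and of the affine homotopy $g$), injective on $\Omega$ for a.e.\ $z$, we have $N_{s,y}(z)\le 1$ for a.e.\ $z\in\Omega$, hence
$$\int_\Omega\|\nabla u(\Gamma_{s,y}(x))\|^p\,dx\le\frac1\eta\,\|\nabla u\|_p^p.$$
Integrating this over $y\in\Omega$ (of finite measure) and $s\in[\frac12,1]$ bounds the first term by $\frac{C^p|\Omega|}{2\eta}\|\nabla u\|_p^p$. The second term is bounded in the same way using $\jac(\tilde\Gamma_{s,x})(y)\ge\eta$, giving the same bound. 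Therefore
$$\int_\Omega\int_\Omega|u(x)-u(y)|^p\,dx\,dy\le\frac{C^p|\Omega|}{\eta}\,\|\nabla u\|_p^p,$$
and combining with the Jensen reduction above yields
$$\|u-u_\Omega\|_p^p\le\frac{C^p}{\eta}\,\|\nabla u\|_p^p,$$
which is the desired inequality.
\end{proof}
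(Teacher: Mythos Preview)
Your proof is correct and follows the same route as the paper: represent $u(x)-u(y)$ as a line integral along the arcs of Lemma~\ref{lem_2}, split the $s$-integral at $\tfrac12$, and use the Jacobian lower bounds to change variables in $x$ (resp.\ $y$); you apply Jensen's inequality twice where the paper uses Minkowski for $s\ge\tfrac12$ and H\"older for $s<\tfrac12$, which is a cosmetic difference and arguably cleaner. One caution: the injectivity of $\Gamma_{s,y}$ does not follow from the construction (the map $h$ of Lemma~\ref{lem_1} is glued piecewise on a cover and need not be globally one-to-one), so rely on your stated fallback of uniformly bounded generic fiber cardinality for a bounded subanalytic family---this is precisely what the paper silently absorbs into its unnamed constant $C$.
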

 \begin{proof}We may assume that $u$ is smooth. For $x$ and $y$ in $\Omega$, let $\gamma_{x,y}$ be the mapping provided by  Lemma \ref{lem_2}. 
We have
\begin{eqnarray}\label{eq_lp}||u-u_{\Omega}||_p\nonumber&=&\left(\int_{\Omega}\left|u(x)-\frac{1}{|\Omega|}\int_{\Omega} u(y)dy\right|^pdx\right)^{1/p} \\ \nonumber
&=& \left(\int_{\Omega}\frac{1}{|\Omega|^p}\left|\int_{\Omega} u(x)-u(y)\,dy\right|^pdx\right)^{1/p} \\ \nonumber
&=& \frac{1}{|\Omega|}\left(\int_{\Omega}\left|\int_{\Omega}\int_0^1<\nabla u (\gamma_{x,y}(s)),\gamma'_{x,y}(s)>dsdy\right|^pdx\right)^{1/p}\\ \nonumber
%&\le & \frac{1}{|\Omega|}\left(\int_{\Omega}\left|\int_{\Omega}\int_0^1|<\nabla u (\gamma_{x,y}(s)),\gamma'_{x,y}(s)>|dsdy\right|^pdx\right)^{1/p}\\ \nonumber
&\le& \frac{1}{|\Omega|}\left(\int_{\Omega}\left|\int_0^1\int_{\Omega}||\nabla u (\gamma_{x,y}(s))||\cdot||\gamma'_{x,y}(s)||dyds\right|^pdx \right)^{1/p}\\
&\le& \frac{C}{|\Omega|}\left(\int_{\Omega}\left|\int_0^1\int_{\Omega}||\nabla u (\gamma_{x,y}(s))||dyds\right|^pdx \right)^{1/p}.
\end{eqnarray}
Thanks to Minkowski's inequality, we can write
\begin{eqnarray*} \left(\int_{\Omega}\left|\int_{1/2}^1\int_{\Omega}||\nabla u (\gamma_{x,y}(s))||dyds\right|^pdx \right)^{1/p}&\le& \int_{1/2}^1\int_{\Omega}\left(\int_{\Omega}||\nabla u (\gamma_{x,y}(s))||^pdx\right)^{1/p}dyds.
\end{eqnarray*}

Note that, by Lemma \ref{lem_2},  we have for all $s\in [1/2,1]$:
%\begin{eqnarray*} 
$$\int_{\Omega}||\nabla u (\gamma_{x,y}(s))||^p dx
%\le \int_{\Omega}||\nabla u (\gamma_{x,y}(s))||^pdx$$
%&\le &
\le \frac{1}{\eta}\int_{\Omega}||\nabla u (\gamma_{x,y}(s))||^p \jac(\Gamma_{s,y}(x))dx \le C\frac{||\nabla u||_p^p}{\eta},$$
for some constant $C$, so that, plugging this into the preceding estimate, we get:
%\end{eqnarray*}
\begin{equation}\label{eq_s_le}
 \left(\int_{\Omega}\left|\int_{1/2}^1\int_{\Omega}||\nabla u (\gamma_{x,y}(s))||dyds\right|^pdx \right)^{1/p}\le C\frac{|\Omega|}{\eta^{1/p}}||\nabla u||_p.
\end{equation}
Moreover, denoting by $p'$ the H\"older conjugate of $p$, thanks to H\"older's inequality, we can write: 
%\begin{eqnarray*} 
%$\int_{1/2}^1\int_{\Omega}\left(\int_{\Omega}||\nabla u (\gamma_{x,y}(s))||^p\cdot||\gamma'_{x,y}(s)||^pdx\right)^{1/p}dyds  \le \int_{1/2}%^1\int_{\Omega}\left(\int_{\Omega}||\nabla u (\gamma_{x,y}(s))||^p\cdot||\gamma'_{x,y}(s)||^pdx\right)^{1/p}dyds $
%$  \int_{1/2}^1\left(\int_{\Omega}\left(\int_{\Omega}||\nabla u (\gamma_{x,y}(s))||^p\cdot||\gamma'_{x,y}(s)||^pdx\right)^{1/p}dy\right)^{p\cdot 1/p} ds$%\\  \end{eqnarray*} 
$$\left|\int_{\Omega}\int_0^{1/2}||\nabla u (\gamma_{x,y}(s))||dsdy\right|^p\le \frac{|\Omega|^{p/p'}}{2^{p/p'}} \int_{\Omega}\int_0^{1/2}||\nabla u (\gamma_{x,y}(s))||^p ds dy$$
$$\le \frac{|\Omega|^{p/p'}}{\eta\, 2^{p/p'}} \int_0^{1/2} \int_{\Omega}||\nabla u (\gamma_{x,y}(s))||^p \jac(\tilde{\Gamma}_{s,x}(y))dxds \le C \frac{|\Omega|^{p/p'}}{\eta \,2^{p/p'}} ||\nabla u||_p^p\,,$$for some constant $C$. 
Together with (\ref{eq_lp}) and (\ref{eq_s_le}), this yields the desired result.
\end{proof}

\end{document}